\newtheorem{thm}{Theorem}[section] %the resolution could also be [subsection]
\newtheorem{lem}[thm]{Lemma}
\theoremstyle{definition}
\newtheorem{rem}[thm]{Remark}
\newcommand\operA[2]{{\if!#2!\operatorname{#1}\else{\operatorname{#1}_{#2}^{\phantom{I}}}\fi}} % To be used within Bdefs. Usage: $\operA{N}{K/F}$ produces $N_{K/F}$; $\operA{N}{}$ produces $N$.
\def\tr{{\operatorname{Tr}}}
\def\dim{{\operatorname{dim}}}
\newcommand{\Trace}[1][]{\if!#1!\operatorname{Tr}\else{\operatorname{Tr}_{#1}^{\phantom{I}}}\fi} % Usage: $\Tr[K/F](a)$.
\long\def\forget#1\forgotten{{}} %
\def\({\left(}
\def\){\right)}
\newcommand\LAY[3][]{{\begin{array}{c}\mbox{#2} \if#1!{}\else{+}\fi \\ \mbox{#3}\end{array}}}
\def\ps@pprintTitle{%
 \let\@oddhead\@empty
 \let\@evenhead\@empty
 \def\@oddfoot{}%
 \let\@evenfoot\@oddfoot}
\newcommand{\bigperp}{%
  \mathop{\mathpalette\bigp@rp\relax}%
  \displaylimits
}
\newcommand{\bigp@rp}[2]{%
  \vcenter{
    \m@th\hbox{\scalebox{\ifx#1\displaystyle2.1\else1.5\fi}{$#1\perp$}}
  }%
}
\renewcommand{\geq}{\geqslant}
\renewcommand{\leq}{\leqslant}
\newif\iffurther
\journal{Pure and Applied Algebra}
\begin{document}
\begin{frontmatter}

\title{Linkage of Sets of Cyclic Algebras}

\author{Adam Chapman}
\ead{adam1chapman@yahoo.com}
\address{School of Computer Science, Academic College of Tel-Aviv-Yaffo, Rabenu Yeruham St., P.O.B 8401 Yaffo, 6818211, Israel}

\begin{abstract}
Let $p$ be a prime integer and $F$ the function field in two algebraically independent variables over a smaller field $F_0$.
We prove that if $\operatorname{char}(F_0)=p\geq 3$ then there exist $p^2-1$ cyclic algebras of degree $p$ over $F$ that have no maximal subfield in common, and if $\operatorname{char}(F_0)=0$ then there exist $p^2$ cyclic algebras of degree $p$ over $F$ that have no maximal subfield in common. 
\end{abstract}

\begin{keyword}
Division Algebras, Cyclic Algebras, Valuation Theory, Linkage, Fields of Positive Characteristic
\MSC[2010] 16K20 (primary); 16W60 (secondary)
\end{keyword}
\end{frontmatter}

\section{Introduction}

A cyclic algebra of prime degree $p$ over a field $F$ takes the form
$$(\alpha,\beta)_{p,F}=F \langle x,y : x^p=\alpha, y^p=\beta, y x y^{-1}=\rho x\rangle,$$
for some $\alpha,\beta \in F^\times$ when $\operatorname{char}(F)\neq p$ and $F$ contains a primitive $p$th root of unity $\rho$.
This algebra is a division algebra if $\alpha \not \in (F^\times)^p$ and $\beta$ is not a norm in the field extension $F[\sqrt[p]{\alpha}]/F$, and otherwise it is the matrix algebra $M_p(F)$.
When $\operatorname{char}(F)=p$, a cyclic algebra of degree $p$ over $F$ takes the form
$$[\alpha,\beta)_{p,F}=F \langle x,y : x^p-x=\alpha, y^p=\beta, y x y^{-1}=x+1 \rangle,$$
for some $\alpha \in F$ and $\beta \in F^\times$.
This algebra is a division algebra if $\alpha \not \in \wp(F)=\{\lambda^p-\lambda:\lambda\in F\}$ and $\beta$ is not a norm in the field extension $F[x: x^p-x=\alpha]/F$, and otherwise it is the matrix algebra $M_p(F)$.
These algebras won their significance for being the generators of ${_pBr}(F)$ (see \cite{MS} and \cite[Chapter 9]{GilleSzamuely:2006}). 
These algebras are called ``quaternion algebras" when $p=2$.

We say that cyclic algebras $A_1,\dots,A_\ell$ of degree $p$ over $F$ are linked if they share a common maximal subfield.
We say that ${_pBr}(F)$ is $\ell$-linked if every $\ell$ cyclic algebras of degree $p$ over $F$ are linked.

The linkage properties of such algebras demonstrate a deeper phenomenon yet to be fully understood: clearly if $A$ and $B$ are linked then $A \otimes B$ is not a division algebra, but for quaternion algebras the converse holds true as well.
This means that ${_2Br}(F)$ is 2-linked if and only if its symbol length is $\leq 1$ (i.e., every class is represented by a single quaternion algebra).
Moreover, if ${_2Br}(F)$ is 2-linked then the $u$-invariant of $F$ is either 0,1,2,4 or 8 (\cite{ElmanLam:1973} and \cite{ChapmanDolphin:2017}), and for nonreal fields $F$, ${_2Br}(F)$ is 3-linked if and only if $u(F) \leq 4$ (see \cite{Becher} and \cite{ChapmanDolphinLeep}).

For local fields $F$, ${_pBr}(F)$ is clearly $\ell$-linked for any $\ell$. It follows from the local-global principle (e.g., see \cite{ChapmanTignol:2019} and \cite[Proposition 15]{Sivatski:2014}) that for global fields $F$, ${_pBr}(F)$
 is $\ell$-linked for any $\ell$ too.
A question was raised (\cite{Becher}) on whether function fields $F=F_0(\alpha,\beta)$ in two algebraically independent variables over algebraically closed fields $F_0$ also satisfy this property.
It was answered in the negative for quaternion algebras (\cite{ChapmanTignol:2019} for $\operatorname{char}(F)=0$ and \cite{Chapman:2020} for $\operatorname{char}(F)=2$), showing that for such fields ${_2Br}(F)$ is not 4-linked.

In the current paper, we extend this observation to cyclic algebras of odd prime degree $p$ over $F=F_0(\alpha,\beta)$, showing that when $\operatorname{char}(F_0)=p$, the group ${_pBr}(F)$ is not $(p^2-1)$-linked, and when $\operatorname{char}(F_0)=0$, the group ${_pBr}(F)$ is not $p^2$-linked.

\section{Characteristic $p$}

\begin{lem}\label{Trace}
Let $A=[\alpha,\beta)_{p,F}$ be a cyclic algebra of degree $p$ generated by $x$ and $y$ over a field $F$ of $\operatorname{char}(F)=p$, and write $\tr : A \rightarrow F$ for its reduced trace map.
Then for any $\lambda=\sum_{i=0}^{p-1} \sum_{j=0}^{p-1} c_{i,j} x^i y^j \in A$, $\tr(\lambda)=-c_{p-1,0}$.
\end{lem}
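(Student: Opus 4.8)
The plan is to compute the reduced trace by realizing $A$ inside a matrix algebra over a splitting field and then reading off the ordinary matrix trace, which I will denote $\operatorname{tr}$ to distinguish it from the reduced trace $\tr$. Since $\tr$ is $F$-linear, it suffices to evaluate it on the basis monomials $x^iy^j$ with $0\leq i,j\leq p-1$ and to show that $\tr(x^iy^j)=0$ in every case except $\tr(x^{p-1})=-1$.

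First I would fix an element $\theta$, in an algebraic closure of $F$, with $\theta^p-\theta=\alpha$, and set $L=F(\theta)$; then $L$ splits $A$ because $\alpha=\theta^p-\theta\in\wp(L)$. Next I would write down an explicit homomorphism $A\otimes_F L\to\M[p](L)$ by sending $x$ to the diagonal matrix $X=\mathrm{diag}(\theta,\theta+1,\dots,\theta+(p-1))$ and $y$ to the weighted cyclic shift $Y$ defined on the standard basis by $Ye_k=e_{k-1}$ for $1\leq k\leq p-1$ and $Ye_0=\beta e_{p-1}$. A direct check gives $X^p-X=\alpha I$ (using $(\theta+k)^p-(\theta+k)=\theta^p-\theta$ in characteristic $p$), $Y^p=\beta I$, and $YXY^{-1}=X+I$, so the defining relations are respected. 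Since $A$ is central simple of degree $p$, the induced map $A\otimes_F L\to\M[p](L)$ is a nonzero homomorphism of $L$-algebras of equal dimension $p^2$, hence an isomorphism; consequently the reduced trace of any element of $A$ equals the matrix trace $\operatorname{tr}$ of its image.

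I would then compute $\operatorname{tr}(X^iY^j)$. Because $Y^j$ shifts the standard basis by $j$ while $X^i$ is diagonal, the matrix $X^iY^j$ has a nonzero diagonal only when $j\equiv 0\pmod p$; thus $\operatorname{tr}(X^iY^j)=0$ whenever $1\leq j\leq p-1$, which annihilates every monomial involving $y$. For $j=0$ what remains is $\operatorname{tr}(X^i)=\sum_{k=0}^{p-1}(\theta+k)^i$, which lies in $F$.

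The one genuine computation is the power-sum identity in characteristic $p$. Expanding by the binomial theorem reduces $\sum_{k=0}^{p-1}(\theta+k)^i$ to a combination of the sums $S_m=\sum_{k=0}^{p-1}k^m$, and the classical fact that $S_m\equiv-1\pmod p$ when $m>0$ and $(p-1)\mid m$, while $S_m\equiv 0$ otherwise for $0\leq m\leq p-1$, shows that among $0\leq i\leq p-1$ only $i=p-1$ survives: $\operatorname{tr}(X^{p-1})=S_{p-1}=-1$ and $\operatorname{tr}(X^i)=0$ for $i<p-1$. Writing $\lambda=\sum_{i,j}c_{i,j}x^iy^j$ and assembling these evaluations gives $\tr(\lambda)=\sum_i c_{i,0}\operatorname{tr}(X^i)=-c_{p-1,0}$, as claimed. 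I expect the construction of the splitting representation to be the main obstacle, whereas the power-sum identity, though essential, is routine.
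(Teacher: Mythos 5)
Your proof is correct, but it takes a genuinely different route from the paper's. You split $A$ explicitly over $L=F(\theta)$ with $\theta^p-\theta=\alpha$, realize $x$ as the diagonal matrix $\mathrm{diag}(\theta,\theta+1,\dots,\theta+p-1)$ and $y$ as a weighted cyclic shift, and read off the reduced trace as the matrix trace: monomials involving $y$ vanish because their matrices have empty diagonal, and the $j=0$ case reduces to the power sums $\sum_{k=0}^{p-1}k^m$ in $\mathbb{F}_p$. The paper instead stays inside $A$: it first notes that every $v\in F(x)y^j$ with $j\neq 0$ satisfies $v^p\in F$, whence $\tr(v)=0$, and then reduces to the field trace $\tr_{K/F}(x^i)$ for $K=F(x)$, computed from the regular-representation matrix of $x^i$ on the basis $1,x,\dots,x^{p-1}$. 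Your approach buys an explicit, reusable matrix model of $[\alpha,\beta)_{p,F}$ and makes the identification of reduced trace with matrix trace transparent, at the cost of verifying the three defining relations and the isomorphism $A\otimes_F L\cong \operatorname{M}_p(L)$ (both of which you handle correctly); the paper's argument is more economical and, as its Remark points out, its regular-representation computation generalizes to roots of $X^n-X-\alpha$ in any characteristic. Both arguments yield $\tr(x^{p-1})=p-1=-1$ and $\tr(x^iy^j)=0$ for all other monomials, so the conclusions agree.
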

\begin{proof}
For each $j\in \{1,\dots,p-1\}$, every element $v$ in $F(x)y^j$ satisfies $v^p \in F$, and so $\tr(v)=0$.
The problem therefore reduces to calculating $\tr_{K/F}(x^i)$
where $K = F(x)$ with $x$ a root of the irreducible polynomial $X^p-X-\alpha$ in $F[X]$. For this, let
$L_{x^i}: K \rightarrow K$ be the $F$-linear transformation given by multiplication by $x^i$. For $i \in \{1,2,\dots,p-1\}$,
the matrix $[L_{x^i}]$ of $L_{x^i}$ relative to the $F$-basis $\{1,x,\dots,x^{p-1}\}$ of $K$ has the following form: a diagonal of $1$-s starting at the $(i+1,1)$-entry (i.e., row $i+1$ and column 1), a diagonal of $\alpha$-s starting at the $(1,p- i + 1)$-entry, a $1$ directly below each $\alpha$, and all other entries are 0-s. Thus, $\tr_{K/F} (x^i) = \tr[L_{x^i}] = 0$ for $i \in \{1,\dots,p-2\}$,
while $\tr_{K/F} (x^{p-1}) = \tr[L_{x^{p-1}} ] = p -1$.
\end{proof}
\begin{rem}
The last statement appeared in {\cite[Remark 2.2]{ChapmanChapman:2017}}, but we provided here a simpler proof which was suggested by an anonymous colleague.
Note that the trace argument works in a more general setting, in any characteristic and  for roots $x$ of any irreducible polynomial $X^n-X-\alpha$ for any natural number $n$.
\end{rem}

\begin{thm}\label{generalp}
Let $p$ be an odd prime, $F_0$ a field of $\operatorname{char}(F_0)=p$ and $F=F_0(\alpha,\beta)$ the function field in two algebraically independent variables $\alpha$ and $\beta$ over $F_0$. Then there exist $p^2-1$ cyclic algebras of degree $p$ over $F$ that share no maximal subfield.
\end{thm}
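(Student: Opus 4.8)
The plan is to exhibit an explicit family of $p^2-1$ cyclic $p$-algebras indexed by the nonzero classes of $(\mathbb{Z}/p)^2$ and to show, via valuation theory, that no degree-$p$ extension of $F$ can split all of them simultaneously. Concretely, I would fix the rank-two valuation $v$ on $F$ obtained by embedding $F=F_0(\alpha,\beta)$ into the iterated Laurent series field $F_0((\alpha))((\beta))$, so that $v$ has value group $\mathbb{Z}^2$ ordered lexicographically with $v(\alpha)=(1,0)$, $v(\beta)=(0,1)$, and residue field $F_0$. For each nonzero pair $(a,b)\in\{0,\dots,p-1\}^2$ I would define a symbol $A_{a,b}=[\gamma_{a,b},\alpha^a\beta^b)_{p,F}$ whose second slot realizes the ramification direction $(a,b)\bmod p$, choosing the first slots $\gamma_{a,b}$ (monomials in $\alpha,\beta$, or their Artin--Schreier translates) so that every $A_{a,b}$ is a division algebra. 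The first verification is that each $A_{a,b}$ is division: here I would use the norm/trace criterion together with \Lref{Trace}, checking that $\gamma_{a,b}\notin\wp(F)$ and that $\alpha^a\beta^b$ is not a norm from the Artin--Schreier extension; because $\alpha,\beta$ are algebraically independent over $F_0$, the residue computation at $v$ makes both conditions transparent.

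The heart of the argument is to encode each Brauer class by its ramification. I would invoke the Artin--Schreier (Kato) residue maps attached to $v$, landing in the Artin--Schreier group of the residue field, and record the class of $A_{a,b}$ by the direction $(a,b)$ it ramifies along. The key structural input is the dichotomy that \emph{every degree-$p$ extension of the characteristic-$p$ field $F$ is either separable or purely inseparable}, and in the separable case one may further reduce to a cyclic (Artin--Schreier) extension: if a non-normal separable $K/F$ of degree $p$ split every $A_{a,b}$, its Galois closure contains a cyclic degree-$p$ subextension that still splits them, since the remaining degree is prime to $p$. Thus a hypothetical common maximal subfield has one of exactly two shapes, $K=F[\wp^{-1}(\mu)]$ or $K=F[\sqrt[p]{a}]$.

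For the separable case I would use that the classes split by a fixed cyclic extension $F[\wp^{-1}(\mu)]$ are precisely the symbols $[\mu,\eta)_{p,F}$ (the relative Brauer group $\operatorname{Br}(F[\wp^{-1}(\mu)]/F)\cong F^\times/N(F[\wp^{-1}(\mu)]^\times)$). Hence if $K=F[\wp^{-1}(\mu)]$ split every $A_{a,b}$, each residue of $A_{a,b}$ would be an $\mathbb{F}_p$-multiple of the single Artin--Schreier class of $\mu$; that is, all $p^2-1$ ramification data would lie in a one-dimensional subspace. For the inseparable case I would compute how a purely inseparable $K=F[\sqrt[p]{a}]$ acts on $v$: adjoining $\sqrt[p]{\alpha^s\beta^t}$ enlarges the value group by $\tfrac1p(s,t)$ and therefore kills ramification only along the single direction $(s,t)\bmod p$, leaving the other directions intact. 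Since the chosen $A_{a,b}$ ramify along \emph{all} nonzero directions of $(\mathbb{Z}/p)^2$, neither a one-dimensional subspace (separable case) nor the annihilation of a single direction (inseparable case) can account for the whole family, and we reach a contradiction in each case. This also explains the count: in characteristic $p$ the inseparable subfields absorb exactly one ramification direction ``for free'', so $p^2-1$ algebras already suffice, whereas in characteristic $0$ one needs $p^2$.

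I expect the main obstacle to be the control of residues for symbols $[\gamma,\delta)$ whose \emph{first} slot $\gamma$ is itself ramified: the clean tame-symbol formula applies when $\gamma$ is a unit, but in characteristic $p$ these extensions are wildly ramified and a naive residue can carry a defect. To keep the residue bookkeeping honest I would either arrange the $\gamma_{a,b}$ to be $v$-units wherever possible and handle the mixed monomials by the iterated (two-variable) residue of the rank-two valuation, or split $v$ into the two coordinate $\alpha$- and $\beta$-adic valuations and track the two residue vectors separately. Verifying that the chosen first slots simultaneously keep all $p^2-1$ algebras division and produce ramification filling every nonzero direction, while surviving the wild-ramification subtleties of the inseparable case, is the delicate part of the proof; the trace formula of \Lref{Trace} is the tool that makes the division and non-splitting computations explicit.
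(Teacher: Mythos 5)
Your strategy---classify the hypothetical common maximal subfield $K/F$ of degree $p$ as cyclic, separable non-normal, or purely inseparable, and rule out each case by a ramification invariant---is genuinely different from the paper's argument, but it has a gap at its center: the ``ramification direction'' of $A_{a,b}$ is never defined as an invariant of the Brauer class, and the constraints you derive in your two main cases are not constraints on the same quantity. The relative Brauer group $\operatorname{Br}(F[\wp^{-1}(\mu)]/F)=\{[\mu,\eta)_{p,F}:\eta\in F^\times\}$ constrains only the \emph{first} slot; the second slot $\eta$ is arbitrary, so $v(\eta)\bmod p$ ranges over all of $(\mathbb{Z}/p)^2$. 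Since your algebras carry their direction in the \emph{second} slot ($A_{a,b}=[\gamma_{a,b},\alpha^a\beta^b)_{p,F}$), the classes split by a single cyclic extension do not have directions confined to a one-dimensional subspace in your sense, and the separable case does not close. Dually, Hochschild's description of $\operatorname{Br}(F[\sqrt[p]{u}]/F)$ as the set of classes $[\gamma,u)_{p,F}$ constrains the second slot but leaves the first free. To finish you would need a single invariant, well defined on isomorphism classes rather than on symbol presentations, that is constrained in both cases; building it for these \emph{wildly} ramified symbols over a rank-two valuation is exactly the ``main obstacle'' you flag and then defer. Two further holes: the reduction from a non-normal separable $K$ to a cyclic extension yields a cyclic extension of the prime-to-$p$ field fixed by a Sylow $p$-subgroup of the Galois closure, not of $F$, so the whole residue computation must be transported there (where $v$ may extend with nontrivial ramification and residue degree); and the $\gamma_{a,b}$, on which everything depends, are never chosen.

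For contrast, the paper sidesteps the case analysis entirely. It takes $A_{i,j}=[\alpha^i\beta^j,\beta)_{p,F}$ (and $[\beta^j,\alpha)_{p,F}$ when $i=0$)---note the monomial sits in the \emph{first} slot---and shows each is \emph{totally} ramified for the rank-two valuation, with value group $\frac1p\mathbb{Z}\times\frac1p\mathbb{Z}$. The invariant is not a residue but the set of values of the trace-zero subspace $V_{i,j}$: by Lemma~\ref{Trace} this set misses exactly the coset $(\frac ip,\frac jp)+\Gamma_F$, so $\bigcap_{(i,j)}\mathfrak{v}(V_{i,j})=\Gamma_F$; a common maximal subfield would be totally ramified with a trace-zero hyperplane contained in $\bigcap V_{i,j}$, hence with all values in $\Gamma_F$, which is impossible for $p>2$. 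Salvaging your route would essentially require developing Kato-type wild residue maps for this valuation, which is far heavier machinery than the paper's one-line trace computation.
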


\begin{proof}
Note that $F$ is endowed with the right-to-left $(\alpha^{-1},\beta^{-1})$-adic valuation, which we denote by $\mathfrak{v}$. This is in fact the restriction to the standard rank 2 valuation on $F_0(\!(\alpha^{-1})\!)(\!(\beta^{-1})\!)$. Write $\Gamma_F$ for the value group of $F$ with respect to $\mathfrak{v}$. Note $\Gamma_F=\mathbb{Z} \times \mathbb{Z}$.
For each $(i,j)\in I=\{0,1,\dots,p-1\}^{\times 2} \setminus \{(0,0)\}$, write 
$$A_{i,j}=\begin{cases}
[\alpha^i \beta^j,\beta)_{p,F} & i\neq 0\\
[\beta^j,\alpha)_{p,F} & i=0.
\end{cases}$$
Since the values of $\alpha^i \beta^j$ and $\beta$ when $i \neq 0$ are negative and $\mathbb{F}_p$-independent in $\Gamma_F/p\Gamma_F$, the valuation $\mathfrak{v}$ extends to $A_{i,j}$ and $A_{i,j}$ is totally ramified over $F$ with value group $\Gamma_{A_{i,j}}=\frac{1}{p} \mathbb{Z} \times \frac{1}{p} \mathbb{Z}$ (see \cite{Tignol:1992}).
For a similar argument, the valuation extends also to $A_{0,j}$, when $j \neq 0$, and $A_{0,j}$ is totally ramified over $F$ with value group $\Gamma_{A_{0,j}}=\frac{1}{p} \mathbb{Z} \times \frac{1}{p} \mathbb{Z}$ (see also \cite[Remark 3.2]{ChapmanChapman:2017}).
Write $V_{i,j}$ for the subspace of trace zero elements of $A_{i,j}$.
It follows from Lemma \ref{Trace} that $\mathfrak{v}(V_{i,j})/\Gamma_F \not \ni (\frac{i}{p},\frac{j}{p})$, because writing $x$ and $y$ for the standard generators of $A_{i,j}$, $V_{i,j}=\operatorname{Span}_F\{x^k y^\ell : (k,\ell)\in \{0,1,\dots,p-1\}^{\times 2} \setminus \{(p-1,0)\}\}$, the values of the $x^ky^\ell$-s are distinct modulo $\Gamma_F$ and none is congruent to $(\frac{i}{p},\frac{j}{p})$.
Therefore the intersection of all the $\mathfrak{v}(V_{i,j})$-s modulo $\Gamma_F$ is trivial, which means that $\bigcap_{(i,j)\in I} \mathfrak{v}(V_{i,j})=\Gamma_F$.

Now, suppose the contrary, that the algebras above share a maximal subfield $K$. Since $K$ is a subfield of each $A_{i,j}$, it is totally ramified over $F$.
Write $W$ for its subspace of elements of trace 0. Then $\dim_F W$ is at least $p-1$.
Since $W \subseteq  \bigcap_{(i,j)\in I} V_{i,j}$, the values of all the nonzero elements in $W$ are in $\Gamma_F$.
Recall that $p>2$. We can therefore choose two elements $w_1$ and $w_2$ in $K$ whose values are $\mathbb{F}_p$-independent in $\Gamma_K/\Gamma_F$. As a result, they are also linearly independent, and there is a nonzero linear combination of theirs $w_3 \in F w_1 + F w_2$ which lives in $W$. Hence, the value of $w_3$ is either $\mathfrak{v}(w_1)$ or $\mathfrak{v}(w_2)$. In either case, $\mathfrak{v}(w_3)$ is not in $\Gamma_F$, despite the fact that $w_3 \in W$, contradiction.
Consequently, the algebras $A_{i,j}$ have no maximal subfield in common.
\end{proof}

\begin{rem}
Theorem \ref{generalp} holds true also if one replaces $F_0(\alpha,\beta)$ with the field of iterated Laurent series $F_0(\!(\alpha^{-1})\!)(\!(\beta^{-1})\!)$ in two variables over $F_0$ of $\operatorname{char}(F_0)=p$.
This demonstrates another difference between the behaviour of Laurent series in the good characteristic and the bad characteristic, because over an algebraically closed field $F_0$ of $\operatorname{char}(F_0)=0$, the group ${_pBr}(F)$ for $F=F_0(\!(\alpha^{-1})\!)(\!(\beta^{-1})\!)$ is generated by a single division cyclic algebra of degree $p$ and thus ${_pBr}(F)$ is $\ell$-linked for any $\ell$.
\end{rem}

\section{Characteristic 0}

For the proof of the main result, we need the following observation about inseparable field extensions:
\begin{lem}\label{useful}
Let $E$ be a field of $\operatorname{char}(E)=p>0$, and $\alpha \in E \setminus E^p$.
\begin{enumerate}
\item Then the $E^p$-vector spaces $V_i=\operatorname{Span}_{E^p}\{(\alpha-i)^k : k \in \{1,\dots,p-1\}\}$ for $i \in \{0,\dots,p-1\}$ satisfy $\bigcap_{i=0}^{p-1} V_i=\{0\}$.
\item Furthermore, if there is another element $\beta \in E \setminus E^p(\alpha)$, then the vector spaces $W_{i,j}$ given by $W_{i,0}=\operatorname{Span}_{E^p}\{(\alpha-i)^m\beta^n : m,n \in \{0,\dots,p-1\}, (m,n)\neq (0,0)\}$ and $W_{i,j}=\operatorname{Span}_{E^p}\{\alpha^m (\alpha^i\beta-j)^n : m,n \in \{0,\dots,p-1\}, (m,n)\neq (0,0)\}$ for $i\in \{0,\dots,p-1\}$ and $j \in \{1,\dots,p-1\}$, satisfy $\bigcap_{i,j=0}^{p-1} W_{i,j}=\{0\}$.
\end{enumerate}
\end{lem}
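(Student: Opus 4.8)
The plan is to realize each of the listed spaces as the kernel of a ``constant term'' linear functional attached to a power basis, and then to reduce triviality of the intersection to the elementary fact that a polynomial of degree $<p$ cannot vanish at all $p$ elements of $\F_p$. For part (1), since $\alpha \notin E^p$ the minimal polynomial of $\alpha$ over $E^p$ is $X^p-\alpha^p$, so $K=E^p(\alpha)$ has $E^p$-basis $\{1,\alpha,\dots,\alpha^{p-1}\}$; as $i\in\F_p\subseteq E^p$, the set $\{(\alpha-i)^k : 0\le k\le p-1\}$ is an $E^p$-basis too, and $V_i$ is precisely the subspace on which the coefficient of $(\alpha-i)^0$ vanishes, i.e. $V_i=\ker\phi_i$ for that coefficient functional. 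Writing an element as $f=g(\alpha)$ for a unique $g\in E^p[X]$ with $\deg g<p$ and re-expanding $g$ in powers of $(X-i)$ shows $\phi_i(f)=g(i)$. Hence $f\in\bigcap_i V_i$ forces $g$ to vanish at $0,1,\dots,p-1$, and $\deg g<p$ gives $g=0$, so $\bigcap_i V_i=\{0\}$.

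For part (2) I would run the same scheme in two variables over $L=E^p(\alpha,\beta)$, which has dimension $p^2$ with standard basis $\{\alpha^m\beta^n\}_{0\le m,n\le p-1}$. First I would verify that each prescribed family of monomials is genuinely a basis: for $j\ge 1$ put $\gamma=\alpha^i\beta-j$ and note $\gamma\notin E^p(\alpha)$, since otherwise $\beta=\alpha^{-i}(\gamma+j)\in E^p(\alpha)$, against the hypothesis $\beta\notin E^p(\alpha)$; thus $\{\gamma^n\}$ is an $E^p(\alpha)$-basis of $L$ and $\{\alpha^m\gamma^n\}$ an $E^p$-basis. Each $W_{i,j}$ is then the kernel of the functional $\psi_{i,j}$ reading off the coefficient of $1$ in the relevant basis, and it suffices to show $\bigcap\ker\psi_{i,j}=\{0\}$ (equivalently, that these $p^2$ functionals are independent in the $p^2$-dimensional dual). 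Writing $f=F(\alpha,\beta)$ with $F=\sum a_{m,n}S^mT^n$ of degree $<p$ in each variable, the one-variable computation gives $\psi_{i,0}(f)=F(i,0)=\sum_m a_{m,0}i^m$, while for $j\ge 1$ the substitution $\beta=\alpha^{-i}(\gamma+j)$ followed by reduction modulo $\alpha^p=c\in E^p$ yields $\psi_{i,j}(f)=\sum_{m\equiv in\ (p)} a_{m,n}\,j^n\,c^{(m-in)/p}$.

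The endgame is a two-step vanishing argument. Assuming $\psi_{i,j}(f)=0$ for all $i,j$, the equations $\psi_{i,0}(f)=0$ say that the degree-$<p$ polynomial $\sum_m a_{m,0}X^m$ vanishes at all of $\F_p$, so $a_{m,0}=0$ for every $m$. Feeding this into the relations for $j\in\{1,\dots,p-1\}$ with $i$ fixed, I get that $\sum_n B_n^{(i)}X^n$, where $B_n^{(i)}=\sum_{m\equiv in}a_{m,n}c^{(m-in)/p}$, vanishes at $j=1,\dots,p-1$, and also at $j=0$ because $B_0^{(i)}=a_{0,0}=0$; having $p$ roots and degree $<p$ forces every $B_n^{(i)}=0$. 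The decisive final observation is that for fixed $i$ and fixed $n\ge 1$ the congruence $m\equiv in\pmod p$ pins down a \emph{single} $m\in\{0,\dots,p-1\}$, so $B_n^{(i)}$ collapses to one term $a_{m,n}c^{(m-in)/p}$ with $c\ne 0$, giving $a_{m,n}=0$; and as $i$ ranges over $\{0,\dots,p-1\}$ this $m$ runs through all residues (since $n$ is invertible mod $p$). Thus all $a_{m,n}$ vanish and $f=0$.

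The main obstacle I anticipate is the bookkeeping in computing $\psi_{i,j}$ for $j\ge 1$: the change of variables $\beta=\alpha^{-i}(\gamma+j)$ introduces negative powers of $\alpha$ that must be reduced modulo $\alpha^p=c$, and one must confirm that the resulting factors $c^{(m-in)/p}$ are nonzero scalars so that the congruence condition really isolates a single coefficient. Once that formula is established, everything else is a faithful transcription of the Vandermonde-type argument from part (1), now applied once in the variable $i$ and once in the variable $j$.
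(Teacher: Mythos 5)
Your proof is correct, and part (1) is essentially the paper's own argument (the coefficient functional $\phi_i(g(\alpha))=g(i)$ and the Vandermonde/``$p$ roots'' conclusion are identical). For part (2), however, you take a genuinely different route. The paper never writes down functionals: it decomposes each $W_{i,j}$ as a direct sum of a ``large'' summand over an intermediate degree-$p$ field (e.g.\ $W_{i,0}=\operatorname{Span}_{E^p(\alpha)}\{\beta^k\}\oplus\operatorname{Span}_{E^p}\{(\alpha-i)^k\}$, and similarly with $E^p(\alpha^i\beta)$ for $j\geq 1$), applies part (1) to the small summands to compute the partial intersections $\bigcap_i W_{i,0}$ and $W_{0,0}\cap\bigcap_j W_{i,j}$, and then intersects the resulting monomial subspaces. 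Your version instead computes the $p^2$ coefficient functionals $\psi_{i,j}$ explicitly in the standard basis $\{\alpha^m\beta^n\}$ and runs the Vandermonde argument twice, once in $i$ and once in $j$; the bookkeeping with $c=\alpha^p$ and the congruence $m\equiv in\pmod p$ is exactly right, and the factors $c^{(m-in)/p}$ are indeed nonzero units of $E^p$. One concrete advantage of your computation is that it visibly uses \emph{all} $p^2$ spaces, including $W_{0,j}$ for $j\geq 1$ (your functionals $\psi_{0,j}$ are what kill the coefficients $a_{0,n}$). The paper's final displayed intersection, in which the second big intersection runs over $i\in\{1,\dots,p-1\}$ only, omits precisely these spaces, and as literally written that intersection is not trivial --- it contains $\operatorname{Span}_{E^p}\{\beta,\dots,\beta^{p-1}\}$, since $\beta=(c^{-1}\alpha^i\beta)\,\alpha^{p-i}\in\operatorname{Span}_{E^p(\alpha^i\beta)}\{\alpha^k:k\geq 1\}$ for each $i\geq 1$. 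The intended argument needs $i$ to start at $0$ there, which restores triviality; your explicit two-variable computation sidesteps this pitfall entirely.
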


\begin{proof}
The first statement follows from the fact that an element $v=c_0+c_1 \alpha+\dots+c_{p-1} \alpha^{p-1} \in E^p(\alpha)$ is in $V_i$ if and only if 
$$c_0+c_1 i+c_2 i^2+\dots+c_{p-1} i^{p-1}=0.$$
If we assume that $v \in \bigcap_{i=0}^{p-1}V_i$, then the polynomial $c_0+c_1 X+\dots+c_{p-1} X^{p-1}\in E^p[X]$ has at least $p$ distinct roots in $E^p$ (which are the elements of the subfield $\mathbb{F}_p$), it must be the zero polynomial, i.e., $c_0=c_1=\dots=c_{p-1}=0$.

For the second statement, we first note that 
$W_{i,0}$ can be written as 
$$W_{i,0}=\operatorname{Span}_{E^p(\alpha-i)}\{\beta^k : k \in \{1,\dots,p-1\}\}\oplus\operatorname{Span}_{E^p}\{(\alpha-i)^k : k \in \{1,\dots,p-1\}\}=$$
$$=\operatorname{Span}_{E^p(\alpha)}\{\beta^k : k \in \{1,\dots,p-1\}\}\oplus\operatorname{Span}_{E^p}\{(\alpha-i)^k : k \in \{1,\dots,p-1\}\}.$$
It follows from the first statement that
$$\bigcap_{i=0}^{p-1} W_{i,0}=\operatorname{Span}_{E^p(\alpha)}\{\beta^k : k \in \{1,\dots,p-1\}\}.$$
Now, for any $i \in \{0,\dots,p-1\}$, the space $W_{0,0}$ can also be written as
$$W_{0,0}=\operatorname{Span}_{E^p(\alpha^i \beta)}\{\alpha^k : k \in \{1,\dots,p-1\}\}\oplus\operatorname{Span}_{E^p}\{(\alpha^i \beta-0)^k : k \in \{1,\dots,p-1\}\},$$
and for any $j \in \{1,\dots,p-1\}$ we can write the space $W_{i,j}$ as
$$W_{i,j}=\operatorname{Span}_{E^p(\alpha^i \beta-j)}\{\alpha^k : k \in \{1,\dots,p-1\}\}\oplus\operatorname{Span}_{E^p}\{(\alpha^i \beta-j)^k : k \in \{1,\dots,p-1\}\}$$
$$=\operatorname{Span}_{E^p(\alpha^i \beta)}\{\alpha^k : k \in \{1,\dots,p-1\}\}\oplus\operatorname{Span}_{E^p}\{(\alpha^i \beta-j)^k : k \in \{1,\dots,p-1\}\}.$$
It follows from the first statement that for any $i \in \{0,\dots,p-1\}$,
$$W_{0,0} \cap \left(\bigcap_{j=1}^{p-1} W_{i,j}\right)=\operatorname{Span}_{E^p(\alpha^i \beta)}\{\alpha^k : k \in \{1,\dots,p-1\}\}.$$
Since the intersection $\bigcap_{(i,j)\in \{0,\dots,p-1\}^{\times 2}} W_{i,j}$ can be written as
$$\bigcap_{(i,j)\in \{0,\dots,p-1\}^{\times 2}} W_{i,j}=\left( \bigcap_{i=0}^{p-1} W_{i,0} \right) \cap \left( \bigcap_{i=1}^{p-1}\left( W_{0,0} \cap \left(\bigcap_{j=1}^{p-1} W_{i,j}\right) \right) \right),$$
we conclude that
$$\bigcap_{(i,j)\in \{0,\dots,p-1\}^{\times 2}} W_{i,j}=\operatorname{Span}_{E^p(\alpha)}\{\beta^k : k \in \{1,\dots,p-1\}\} \cap \left( \bigcap_{i=1}^{p-1}\left( \operatorname{Span}_{E^p(\alpha^i \beta)}\{\alpha^k : k \in \{1,\dots,p-1\}\} \right) \right),$$
and the intersection on the right-hand side is clearly trivial.
\end{proof}

\begin{thm}
Let $F_0$ be a field of $\operatorname{char}(F_0)=0$ and $F=F_0(\alpha,\beta)$ the function field in two algebraically independent variables over $F_0$.
Then there exist $p^2$ cyclic algebras of degree $p$ over $F$ that have no maximal subfield in common.
\end{thm}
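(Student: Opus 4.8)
The plan is to run the argument of \Tref{generalp} through a valuation whose residue field has characteristic $p$, so that each symbol algebra degenerates to a purely inseparable field extension of the kind analysed in \Lref{useful}. First I would fix a primitive $p$th root of unity $\rho$ and assume $\rho\in F_0$; this is harmless by the usual prime-to-$p$ restriction--corestriction argument along $F_0(\rho)/F_0$, since the presence of a common maximal subfield is unaffected by such a base change. I would then build a valuation $\mathfrak v$ on $F$ of residue characteristic $p$ by extending the $p$-adic valuation of $\Q(\rho)$ (under which $p$ is totally ramified and $\rho\equiv 1$) to $F_0$ via Chevalley's theorem, arranged so that the residue field of $F_0$ is algebraic over $\F_p$, hence perfect; I would finally extend $\mathfrak v$ to $F=F_0(\alpha,\beta)$ residually-transcendentally, making $\alpha,\beta$ units whose residues $\bar\alpha,\bar\beta$ are algebraically independent. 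Then $\{\bar\alpha,\bar\beta\}$ is a $p$-basis of $\bar F$, so $E:=\bar F(\sqrt[p]{\bar\alpha},\sqrt[p]{\bar\beta})$ satisfies $E^p=\bar F$ and is purely inseparable of degree $p^2$; writing $a=\sqrt[p]{\bar\alpha}$, $b=\sqrt[p]{\bar\beta}$, the pair $(a,b)$ will play the role of $(\alpha,\beta)$ in \Lref{useful}.

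Next I would introduce the $p^2$ symbol algebras indexed exactly as the subspaces of \Lref{useful},
$$A_{i,0}=(\alpha-i,\beta)_{p,F}\quad(0\leq i\leq p-1),\qquad A_{i,j}=(\alpha,\alpha^i\beta-j)_{p,F}\quad(0\leq i\leq p-1,\ 1\leq j\leq p-1),$$
and write $\xi,\eta$ for the standard generators of each. Since $\rho$ is a unit with $\bar\rho=1$ (automatic in residue characteristic $p$, as $(\bar\rho-1)^p=\bar\rho^{\,p}-1=0$), these generators commute modulo $\mathfrak v$, and as $\bar\alpha,\bar\beta$ are $p$-independent their residues generate precisely $E$. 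Defining a value on $A_{i,j}$ as the minimum of $\mathfrak v$ over the coefficients in the monomial basis $\{\xi^m\eta^n\}_{0\leq m,n\leq p-1}$ gives a genuine valuation because $E$ is a domain; hence each $A_{i,j}$ is a defectless division algebra with $\Gamma_{A_{i,j}}=\Gamma_F$ and residue field $E$. The key point is that $\tr(\sum c_{m,n}\xi^m\eta^n)=p\,c_{0,0}$, so in characteristic $0$ the vanishing of $\tr$ forces $c_{0,0}=0$; thus the residue of any trace-zero \emph{unit} lies in $\operatorname{Span}_{\bar F}\{\bar\xi^m\bar\eta^n:(m,n)\neq(0,0)\}$, which under $\bar\xi,\bar\eta\mapsto(a-i,b)$ respectively $(a,a^i b-j)$ and using $\bar F=E^p$ is exactly $W_{i,0}$ respectively $W_{i,j}$. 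Their common intersection is trivial by \Lref{useful}.

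The contradiction then runs as follows. Suppose the $A_{i,j}$ share a maximal subfield $K$, necessarily separable of degree $p$ over $F$. The same minimum formula over the Henselization $F^h$ keeps each $A_{i,j}\otimes_F F^h$ a division algebra with residue $E$; since $K\otimes_F F^h$ injects (by flatness) into a division algebra it has no zero divisors and is therefore a single field $K^h$, a common maximal subfield of degree $p$ over $F^h$. From $K^h\subseteq A_{i,j}\otimes F^h$ and $\Gamma_{A_{i,j}}=\Gamma_{F^h}$ one gets $\Gamma_{K^h}=\Gamma_{F^h}$, so $K^h/F^h$ is unramified in value. The trace-zero subspace of $K^h$ has dimension $p-1\geq 1$; choosing a nonzero element and rescaling by $F^{h\times}$ (legitimate as its value lies in $\Gamma_{F^h}$) produces a trace-zero unit $w$ with $\bar w\neq 0$ in $E$. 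As above $\bar w$ lies in every $W_{i,j}$, so $\bar w\in\bigcap_{i,j}W_{i,j}=\{0\}$, a contradiction.

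I expect the main obstacle to be the valuation-theoretic set-up rather than the final descent, which is pure linear algebra once \Lref{useful} is in hand. One must extend the $p$-adic valuation to an arbitrary characteristic-$0$ field $F_0$ while keeping its residue field perfect, so that $\bar\alpha,\bar\beta$ genuinely form a $p$-basis and \Lref{useful} applies verbatim (over $E^p=\bar F$ rather than a proper subfield); and one must justify, in this wildly ramified situation where the residue characteristic equals the degree, that the symbols stay division with commutative residue field $E$. The saving feature is defectlessness: the residue degree $[E:\bar F]=p^2$ already equals $\dim_F A_{i,j}$, which pins down $\Gamma_{A_{i,j}}=\Gamma_F$ and makes the residue of the reduced trace completely transparent.
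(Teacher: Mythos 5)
Your proposal is correct in substance and rests on the same two pillars as the paper's proof: specialisation of $F$ along an extension of the $p$-adic valuation so that the residue situation becomes purely inseparable, and the combinatorial \Lref{useful} applied to the spans of the non-constant monomials $\bar\xi^m\bar\eta^n$. The difference lies in how you pass from a hypothetical common maximal subfield $K$ to a nonzero element of $\bigcap W_{i,j}$. The paper never Henselizes: it observes that a common maximal subfield forces the restrictions $\varphi_{i,j}$ of the reduced norms to the trace-zero subspaces to represent a common nonzero value, normalises the representing vectors to value $0$ using divisibility of the value group of $F_0$ (taken algebraically closed after an extension of scalars), and then reads off the residues of these norms, $\sum c_{m,n}^p\gamma_{i,j}^m\delta_{i,j}^n$, as a nonzero element of $\bigcap W_{i,j}$ with coefficients automatically in $E^p$ --- so no perfectness of the residue field of $F_0$ is needed. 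You instead Henselize, note that $K\otimes_F F^h$ stays a field inside each division algebra $A_{i,j}\otimes_F F^h$, rescale a trace-zero element of $K^h$ to a unit, and take its residue directly; this is closer in spirit to the proof of \Tref{generalp} and avoids the norm forms entirely, at the cost of having to arrange the residue field of $F_0$ to be perfect (so that $E^p=\bar F$ and the lemma applies verbatim) and of justifying the Henselization and unramifiedness steps. Both of these costs are genuinely payable as you describe them (a totally ramified extension along a transcendence basis of $F_0/\Q$ gives a residue field algebraic over $\F_p$, and the min-formula on the monomial basis is a valuation because the residue algebra $E$ is a field), so your route is a valid and arguably more uniform alternative; the paper's route buys a weaker hypothesis on the residue field and dispenses with Henselization by working with represented values of forms rather than with subfields directly.
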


\begin{proof}
Consider the algebras $A_{i,j}=(\gamma_{i,j},\delta_{i,j})_{p,F}$ for $i,j \in \{0,\dots,p-1\}$ where $(\gamma_{i,j},\delta_{i,j})$ are given (as elements of $F\times F$) by the formula
$$(\gamma_{i,j},\delta_{i,j})=\begin{cases}
(\alpha-i,\beta) & (i,j) \in \{0,\dots,p-1\} \times \{0\} \\
(\alpha^i\beta-j,\alpha) &  (i,j) \in \{0,\dots,p-1\} \times \{1,\dots,p-1\}
\end{cases}$$
In the rest of the proof we can assume that $F_0$ is algebraically closed. If it is not, we can extend scalars to $F_0^{alg}(\alpha,\beta)$. If the algebras do not have a common maximal subfield under this restriction, they did not have any common maximal subfield from the beginning.
Denote by $V_{i,j}$ the subspace of $A_{i,j}$ of elements of trace zero.
Every maximal subfield is generated by an element of trace zero, and therefore in order for the algebras to have a common maximal subfield, they must posses nonzero elements of trace zero of the same reduced norm.
Write $\varphi_{i,j}$ for the restriction of the reduced norm to $V_{i,j}$, and thus a necessary condition for the algebras to share a maximal subfield is that the forms $\varphi_{i,j}$ for $i,j \in \{0,\dots,p-1\}$ represent a common nonzero value.
Now, the $p$-adic valuation extends from $\mathbb{Q}$ to $F_0$ with residue field $k$, and thus to $F$ with residue field $E=k(\alpha,\beta)$. (See \cite[Chapter 3]{EnglerPrestel:2005} for details.)
Since the value group of $F_0$ is divisible, if the forms represent a common nonzero value, we can suppose the equality $\varphi_{1,1}(v_{1,1})=\dots=\varphi_{p-1,p-1}(v_{p-1,p-1})$ is obtained for elements $v_{1,1},\dots,v_{p-1,p-1}$ of minimal value 0.
If such a solution to the system above exists, then it gives rise to a solution to the system 
\begin{eqnarray}\label{eq}
\overline{\varphi_{1,1}(v_{1,1})}=\dots=\overline{\varphi_{p-1,p-1}(v_{p-1,p-1})}
\end{eqnarray}
and their value in $k(\alpha,\beta)$ is nonzero as the residue of an element of value zero.

\sloppy The valuation extends from $F$ to the algebras $A_{i,j}$, which are unramified, and their residue algebras are $k(\sqrt[p]{\alpha},\sqrt[p]{\beta})$. (This follows from \cite[Proposition 3.38]{TignolWadsworth:2015} and the fact that $k(\sqrt[p]{\gamma_{i,j}},\sqrt[p]{\delta})=k(\sqrt[p]{\alpha},\sqrt[p]{\beta})$ is a degree $p^2$ field extension of $k$.)
Fixing generators $x$ and $y$ for $A_{i,j}$, the image of the reduced norm of an element $t=\sum_{m=0}^p \sum_{n=0}^p c_{m,n}x^my^n$ of value zero in the residue field $k(\alpha,\beta)$ is $\overline{t}^p$ (by \cite[Lemma 11.16]{TignolWadsworth:2015}), which is $\sum_{m=0}^p \sum_{n=0}^p c_{m,n}^p \gamma_{i,j}^m\delta_{i,j}^n$.
If $\tr(t)=0$ then $c_{0,0}=0$.
Thus, the solution to the system \ref{eq} gives rise to a nontrivial intersection of the $E^p$-vector spaces $W_{i,j}=\operatorname{Span}\{\gamma_{i,j}^m\delta_{i,j}^n: m,n \in \{0,\dots,p-1\}, (m,n)\neq (0,0)\}$ for $i,j \in \{0,\dots,p-1\}$.
However, they intersect trivially by Lemma \ref{useful}.
Hence, the algebras $A_{i,j}$ for $i,j \in \{0,\dots,p-1\}$ share no maximal subfield.
\end{proof}

\section{In the opposite direction}

It is important to point out what is known about the linkage of ${_pBr}(F)$ for function fields $F=F_0(\alpha,\beta)$ over algebraically closed fields $F_0$:
\begin{itemize}
\item When $p=2$, ${_2Br}(F)$ is 3-linked in any characteristic, so the story is complete in this case, for previous papers have shown that it need not be 4-linked.
\item When $p=3$, ${_3Br}(F)$ is 2-linked by an easy argument mentioned in \cite{Artin:1982} based on \cite{Lang:1952}. Here we show that it need not be 8-linked in characteristic 3, or 9-linked in characteristic 0. Between 2 and 8 or 9 there is still a significant gap.
\item There are no results in this direction for $p>3$ to the author's knowledge. There are results on the related period-index problem but that does not settle the problem yet.
\end{itemize}

\section{Acknowledgements}

The author thanks Jean-Pierre Tignol for his helpful comments on the manuscript.

\def\cprime{$'$}

\end{document}